\newtheorem{theorem}{Theorem}[section]
\newtheorem{remark}{Remark}[section]
\begin{document}

	\begin{center}
		{\Large {\bf Classical Kantorovich operators revisited}}
		\vspace{0.5cm} 
		
		{\large Ana-Maria Acu${}^{1}$, Heiner Gonska${}^{2}$}
		
		\vspace{0.3cm}

	\end{center}
	\vspace{0.3cm}
	
	\begin{abstract}
		The main object of this paper is to improve some of the known estimates for classical Kantorovich operators. A quantitative Voronovskaya-type result in terms of second moduli of continuity which improves some previous results is obtained. In order to explain non-multiplicativity of the Kantorovich operators a Chebyshev-Gr\"uss inequality is given. Two Gr\"uss-Voronovskaya theorems for Kantorovich operators are considered as well.\\
		
		\noindent
		{\bf Keywords}: Kantorovich operators, Voronovskaya theorem, rate of convergence, Chebyshev-Gr\"uss inequality, Gr\"uss-Voronovskaya theorem.\\
		{\bf Mathematics Subject Classification (2010)}: 41A10, 41A25, 41A36.
	\end{abstract}
		\vspace*{0.3cm}

\section{Introduction}
In 1930 L.V. Kantorovich \cite{K} introduced a significant modification of the classical Bernstein operators given by
$$
K_{n}(f;x)=(n+1)\displaystyle\sum_{k=0}^n{p}_{n,k}(x)\int_{\frac{k}{n+1}}^{\frac{k+1}{n+1}}f(t)dt.
$$
Here $n\geq 1$, $f\in L_1[0,1]$, $x\in[0,1]$ and
\begin{align*}
&p_{n,k}(x)={n\choose k} x^k(1-x)^{n-k},\,\,0\leq k\leq n, \\
&p_{n,k}\equiv  0,\textrm{ if } k<0\textrm{ or }k>n.
\end{align*}
These mappings are relevant since they provide a constructive tool to approximate any function in $L_p[0,1]$, $1\leq p<\infty$, in the $L_p$-norm. For $p=\infty$, $C[0,1]$ has to be used instead of $L_{\infty}[0,1]$.

These classical Kantorovich operators have been attracting a lot of attention since then, but results on them are somehow scattered in the literature. They share this with other relevant variations of the Bernstein-type: Durrmeyer, genuine Bernstein-Durrmeyer and, last but not least, variation-diminishing Schoenberg splines.

In the present note we first collect and improve some of the known estimates by giving quite a precise inequality for $f\in C^{r}[0,1]$, $r\in \mathbb{N}\cup\{ 0\}$, a new Voronovskaya result in terms of $\omega_2$ and a Chebyshev-Gr\"uss inequality giving an explanation of their non-multiplicativity. The last part of this article deals with two Gr\"uss-Voronovskaya theorems for Kantorovich operators.

Most estimates in this article will be given in terms of moduli of smoothness of higher order. In the background, but not explicitly mentioned, is always the K-functional technique. In this sense we were very much influenced by the work of Zygmund (see, e.g., \cite{Zyg}), a hardly accessible conference contribution of Peetre \cite{Pee} and also by the book of Dzyadyk \cite{Dzy}.

\section{Some previous results}
In this section we collect some results given earlier. Quite a strong general result was given by the second author and Xin-long Zhou \cite{7} in 1995.

Let $\varphi(x)=\sqrt{x(1-x)}$ and  $P(D)$ be the differential operator given by
$$ P(D)f:=(\varphi^2 f^{\prime})^{\prime},\,\, f\in C^{2}[0,1]. $$
For $f\in L_p[0,1]$, $1\leq p\leq \infty$, the functional $K(f,t)_p$ is defined as below
$$ K(f,t)_p:=inf\left\{\| f-g\|_p+t^2\| P(D)g\|_p: g\in C^2[0,1]\right\}. $$
Using the above functional in \cite{7}  the following theorem was proved.
\begin{theorem}\label{t1} There exists an absolute positive constant $C$ such that for all $f\in L_p[0,1]$, $1\leq p\leq\infty$, there holds
	$$ C^{-1}K(f,n^{-1/2})_p\leq\|f-K_nf\|_p\leq CK(f,n^{-1/2})_p.   $$
	\end{theorem}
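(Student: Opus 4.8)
The plan is to prove the two inequalities separately: the \emph{direct} (upper) estimate $\|f-K_nf\|_p\le CK(f,n^{-1/2})_p$ and the \emph{strong converse} (lower) estimate $K(f,n^{-1/2})_p\le C\|f-K_nf\|_p$. Two structural facts about the Kantorovich operators drive everything. First, each $K_n$ is an $L_p$-contraction for every $1\le p\le\infty$: since $K_n$ is positive, reproduces constants, and averages $f$ against the probability weights $p_{n,k}$ together with normalized integration over $[k/(n+1),(k+1)/(n+1)]$, Jensen's inequality together with $\int_0^1 p_{n,k}=\frac{1}{n+1}$ gives $\|K_nh\|_p\le\|h\|_p$. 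Second, the intertwining identity with the Bernstein operators: writing $F(x)=\int_0^x f$, one checks $K_nf=\frac{d}{dx}B_{n+1}F$. Finally, $P(D)f=(\varphi^2 f')'=x(1-x)f''+(1-2x)f'$ is exactly the operator occurring in the Voronovskaya limit $n\bigl(K_nf-f\bigr)\to\tfrac12 P(D)f$, so it is the correct second-order object against which to measure smoothness, and the scaling $n^{-1}=(n^{-1/2})^2$ matches the $t^2$ in the definition of $K(f,t)_p$.

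For the direct estimate I would fix an arbitrary $g\in C^2[0,1]$ and split
$$\|f-K_nf\|_p\le\|f-g\|_p+\|K_n(g-f)\|_p+\|g-K_ng\|_p\le 2\|f-g\|_p+\|g-K_ng\|_p,$$
using contractivity in the middle term. It then suffices to prove the single-operator Jackson-type bound $\|g-K_ng\|_p\le \frac{C}{n}\|P(D)g\|_p$; taking the infimum over $g$ delivers the upper estimate. To obtain this bound I would make the Voronovskaya relation quantitative by representing $K_ng(x)-g(x)$ as an integral of $(P(D)g)(u)$ against an explicit kernel $\kappa_n(x,u)$ built from the second-order Taylor remainder and the moment/Green's-function structure of $P(D)$, and then estimate the $L_p\to L_p$ operator norm of this kernel by $C/n$. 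Proving the two endpoint cases $p=1$ and $p=\infty$ via a Schur test and interpolating by Riesz--Thorin handles all intermediate $p$ uniformly; equivalently, the same bound transfers from the known direct estimate for $B_{n+1}$ through the intertwining identity.

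The strong converse estimate is where the real work lies. The natural competitor in the $K$-functional is $g=K_nf$ itself: then $\|f-g\|_p=\|f-K_nf\|_p$ needs no further work, and everything reduces to a Bernstein-type inequality $\frac1n\|P(D)K_nf\|_p\le C\|f-K_nf\|_p$. I would first establish the raw Markov--Bernstein inequality $\|P(D)K_nh\|_p\le Cn\|h\|_p$ valid for all $h\in L_p$ — a finite-dimensional weighted polynomial inequality that is accessible through the intertwining identity together with the classical weighted Bernstein inequalities for $B_{n+1}$. Applying it to $h=f-K_nf$ already yields $\frac1n\|P(D)K_n(f-K_nf)\|_p\le C\|f-K_nf\|_p$, so the task becomes to absorb the remaining iterated term. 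Feeding the Bernstein inequality and the Jackson-type direct estimate into a Berens--Lorentz iteration (equivalently, the Ditzian--Totik strong-converse scheme) upgrades the resulting weak converse inequality to the asserted strong form. This iteration is the main obstacle: one must verify that the geometric series closes with an \emph{absolute} constant, uniformly across $1\le p\le\infty$, and in particular that the argument does not degenerate at the endpoints $p=1$ and $p=\infty$, where the degeneracy of $\varphi^2=x(1-x)$ at $0$ and $1$ makes the weighted estimates most delicate.

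Once $\frac1n\|P(D)K_nf\|_p\le C\|f-K_nf\|_p$ is secured, substituting $g=K_nf$ into the definition of $K(f,n^{-1/2})_p$ gives the lower estimate, and combining it with the direct half completes the two-sided equivalence. I would expect the cleanest exposition to run the entire argument through the Bernstein intertwining identity, so that both halves become transfers of the corresponding (known) equivalence for $B_{n+1}$; the price of that route is keeping careful track of how the differentiation and the weight $\varphi^2$ interact with the $L_p$-norms during the transfer.
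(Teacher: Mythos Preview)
The paper does not prove this theorem at all: it appears in Section~2 (``Some previous results'') and is simply quoted from \cite{7} (Gonska--Zhou, 1995) with the sentence ``Using the above functional in \cite{7} the following theorem was proved.'' There is therefore no proof in the paper to compare your proposal against.

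That said, your sketch follows the standard architecture one would expect for a strong converse inequality of this type (and, in broad outline, the one actually used in \cite{7}): contractivity plus a Jackson-type bound $\|g-K_ng\|_p\le Cn^{-1}\|P(D)g\|_p$ for the direct half, a Bernstein-type inequality $\|P(D)K_nh\|_p\le Cn\|h\|_p$ for the converse half, and an iteration to upgrade the weak converse to the strong one. The intertwining identity $K_nf=(B_{n+1}F)'$ is indeed the key structural device. Your proposal is a plausible roadmap, but be aware that the iteration step is genuinely delicate: the Berens--Lorentz/Ditzian--Ivanov machinery you invoke requires not just the single Bernstein inequality you wrote down but a controlled comparison between $K_n$ and $K_m$ for different $n,m$ (or equivalently between iterates $K_n^j$), and establishing this uniformly in $p$ with an absolute constant is the substantive content of \cite{7}. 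As written, ``feeding the Bernstein inequality and the Jackson-type direct estimate into a Berens--Lorentz iteration'' glosses over precisely the part that makes this a paper-length result rather than a lemma.
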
 
Also, in order to characterize the $K$-functional used in Theorem \ref{t1}, the next result was given in \cite{7}:

\begin{theorem}
	We have
	$$ K(f,t)_p\sim \omega_{\varphi}^2(f,t)_p+t^2E_0(f)_p,\,\, 1<p<\infty,   $$
	and
	$$ K(f,t)_{\infty}\sim\omega_{\varphi}^2(f,t)_{\infty}+\omega(f,t^2)_{\infty}. $$
	Here  $\omega(f,t)_p$ is the classical modulus, $\omega_{\varphi}^2(f,t)_{\infty}$ denotes the second order modulus of smoothness with weight function $\varphi$ and $E_0(f)_p$ is the best approximation constant of $f$ defined by
	$$ E_0(f)_p=\inf_{c}\|f-c\|_p. $$	
\end{theorem}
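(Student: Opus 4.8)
The equivalence $\sim$ abbreviates a two-sided estimate with constants depending only on $p$, and, as is customary for such K-functionals, is to be read for $t$ in a bounded range, say $0<t\le 1$. The plan is to reduce the statement to the classical Ditzian--Totik characterization of $\omega_\varphi^2$ and then to account separately for the first-order part of $P(D)$. The starting observation is that, for $g\in C^2[0,1]$,
$$ P(D)g=(\varphi^2 g')'=\varphi^2 g''+(1-2x)g', $$
so $P(D)g$ differs from the pure Ditzian--Totik symbol $\varphi^2 g''$ only by the first-order term $(1-2x)g'=(\varphi^2)'g'$. Introducing the auxiliary functional
$$ \overline{K}(f,t^2)_p:=\inf\left\{\|f-g\|_p+t^2\|\varphi^2 g''\|_p:g\in C^2[0,1]\right\}, $$
I would invoke the by-now classical equivalence $\overline{K}(f,t^2)_p\sim\omega_\varphi^2(f,t)_p$, which I take as known. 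It then suffices to establish $K(f,t)_p\sim\overline{K}(f,t^2)_p+t^2E_0(f)_p$ for $1<p<\infty$, and the analogue with $\omega(f,t^2)_\infty$ in place of $t^2E_0(f)_p$ when $p=\infty$. Since $K$, $\overline K$ and $E_0$ are all invariant under adding constants to $f$, I may normalise $f$ so that its best constant approximant is $0$, i.e.\ $\|f\|_p=E_0(f)_p$.

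For the direct inequality $K(f,t)_p\le C\big(\overline{K}(f,t^2)_p+t^2E_0(f)_p\big)$ I would pick a near-optimal $g$ for $\overline K$ (a $\varphi$-adapted double Steklov mean of $f$) and estimate
$$ t^2\|P(D)g\|_p\le t^2\|\varphi^2 g''\|_p+t^2\|(1-2x)g'\|_p. $$
The first summand is already controlled by $\overline K$, so the whole burden falls on the first-order term. Here the key device is a weighted Hardy--Schur estimate controlling $\|(1-2x)g'\|_p$: writing $u=\varphi^2 g'$ so that $u'=P(D)g$ and $(\varphi^2)'g'=\tfrac{1-2x}{x(1-x)}\,u$, one splits $[0,1]$ into a middle part, where the weight $\tfrac{1-2x}{x(1-x)}$ is bounded, and two endpoint collars, where a Hardy inequality trades the singular weight for control by $\|g\|_p$ after the normalisation $\|f\|_p=E_0(f)_p$. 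This is precisely what produces the $t^2E_0(f)_p$ correction. The reverse inequality $\omega_\varphi^2(f,t)_p\le C\,K(f,t)_p$ follows symmetrically: starting from a near-optimal $g$ for $K$, one uses $\omega_\varphi^2(f,t)_p\le\omega_\varphi^2(f-g,t)_p+\omega_\varphi^2(g,t)_p$, bounds the first term by $4\|f-g\|_p$ and the second by $C\,t^2\|\varphi^2 g''\|_p$ via the direct estimate for smooth functions, and recovers $\|\varphi^2 g''\|_p\le\|P(D)g\|_p+\|(1-2x)g'\|_p$ from the same Hardy estimate.

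I expect the main obstacle to be exactly this endpoint analysis of the first-order term together with the recovery of the correction $t^2E_0(f)_p$ from below. Because $\varphi^2$ degenerates at $0$ and $1$, the derivative $g'$ is only weakly controlled there, the weight $\tfrac{1-2x}{x(1-x)}$ is singular, and the naive bound $\|(1-2x)g'\|_p\le\|g'\|_p$ is useless; the Hardy inequality must be proved uniformly on the shrinking collars. The delicate point in the converse direction is that $P(D)$ annihilates not only the constants but also the $L_p$-function $\log\frac{x}{1-x}$, so one cannot expect a clean bound $E_0(g)_p\le C\|P(D)g\|_p$; rather one must argue that, among genuine $C^2$ functions, this second near-kernel direction is inaccessible at bounded cost (a $C^2$ approximation of $\log\frac{x}{1-x}$ forces $\|P(D)g\|_p\to\infty$ for $p>1$), which is what pins the correction to best approximation by constants.

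This is also the place where the case $p=\infty$ genuinely departs from $1<p<\infty$. The logarithmic solution $\log\frac{x}{1-x}$ of $P(D)g=0$ belongs to $L_p$ for every finite $p$, so there the finite quantity $E_0(f)_p$ suffices; but it is not continuous on $[0,1]$, so in the uniform norm the endpoint collars cannot be measured by a best-constant term and must instead be handled by an argument that produces the classical modulus $\omega(f,t^2)_\infty$. Accordingly I would carry out the collar estimates twice, once in the reflexive range with the Hardy inequality delivering $t^2E_0(f)_p$, and once in $C[0,1]$ with a separate endpoint argument delivering $\omega(f,t^2)_\infty$.
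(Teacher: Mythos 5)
You should first be aware that the paper does not prove this statement at all: it is quoted, without proof, as a known result of Gonska and Zhou \cite{7}, so there is no internal argument to compare yours against and the proposal must stand on its own. Its skeleton is the right one (reduce to the Ditzian--Totik equivalence for $\omega_\varphi^2$, isolate the drift term $(1-2x)g'=P(D)g-\varphi^2g''$, control it by Hardy-type inequalities at the endpoints), but two load-bearing steps are defective. The first concerns the lower bound $t^2E_0(f)_p\le C\,K(f,t)_p$ for $1<p<\infty$. You claim that ``one cannot expect a clean bound $E_0(g)_p\le C\|P(D)g\|_p$'' because $P(D)$ also annihilates $\log\frac{x}{1-x}$, and you replace it by a vague ``inaccessible at bounded cost'' argument that is never made precise. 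That claim is false: the clean bound holds for every $g\in C^2[0,1]$ and every $1<p\le\infty$, and the logarithmic kernel element is harmless precisely because it does not belong to $C^2[0,1]$. Indeed, $u:=\varphi^2g'$ vanishes at $0$ and $1$ (boundedness of $g'$ enters here), so $u(x)=\int_0^xP(D)g=-\int_x^1P(D)g$, whence by H\"older $|u(s)|\le\min(s,1-s)^{1-1/p}\|P(D)g\|_p$, and
$$
E_0(g)_p\le\bigl\|g-g(\tfrac12)\bigr\|_{\infty}\le\int_0^1\frac{|u(s)|}{s(1-s)}\,ds\le C_p\,\|P(D)g\|_p,
$$
since $\min(s,1-s)^{-1/p}$ is integrable exactly when $p>1$. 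Combined with $E_0(f)_p\le\|f-g\|_p+E_0(g)_p$, this settles that half of the equivalence in a few lines; your proposal leaves it unproved and, worse, argues in the wrong direction.

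The second, more serious, gap is the case $p=\infty$: you only announce that the endpoint collars ``must instead be handled by an argument that produces the classical modulus $\omega(f,t^2)_\infty$'', and no such argument is given. Note that the converse half is easy from the same representation as above: $|u(s)|/(s(1-s))\le 2\|P(D)g\|_\infty$ gives $\omega(g,t^2)_\infty\le 2t^2\|P(D)g\|_\infty$, hence $\omega(f,t^2)_\infty\le 2\|f-g\|_\infty+2t^2\|P(D)g\|_\infty$. What is genuinely hard --- and is the heart of \cite{7} --- is the direct half: constructing $g\in C^2[0,1]$ with $\|f-g\|_\infty+t^2\|\varphi^2g''\|_\infty+t^2\|(1-2x)g'\|_\infty\le C\left(\omega_\varphi^2(f,t)_\infty+\omega(f,t^2)_\infty\right)$. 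This cannot come from your Hardy step, which is unavailable at $p=\infty$: the dual Hardy inequality $\left\|\int_x^{1/2}|tg''(t)|\,t^{-1}dt\right\|_{L_p[0,1/2]}\le p\,\|tg''\|_{L_p}$ holds only for $p<\infty$, and smooth truncations of $g(x)=x\log x$ show that $\|(1-2x)g'\|_\infty$ is in general not bounded by any multiple of $\|\varphi^2g''\|_\infty+\|g\|_\infty$; a boundary-layer smoothing tuned to $\omega(f,t^2)$ near $0$ and $1$ is therefore required, and none is sketched. A minor additional flaw: in the direct direction for $p<\infty$ your setup is circular, since you introduce $u'=P(D)g$, which is exactly the quantity not yet controlled there; the correct route is the dual Hardy bound applied to $g'(x)=g'(\tfrac12)-\int_x^{1/2}g''$. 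In sum, for $1<p<\infty$ your plan is repairable once the $E_0$ step is corrected, but the distinctive $p=\infty$ statement is missing its proof.
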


Moreover, all quantities subscripted by $\infty$ are taken with respect to the uniform norm in $C[0,1]$. The following theorem of P\u alt\u anea \cite{8} is the key to give a more explicit result in terms of classical moduli for continous functions. See \cite{5} for details.
\begin{theorem}\cite{8} If $L:C[0,1]\to C[0,1]$ is a positive linear operator, then for $f\in C[0,1], x\in[0,1]$ and each $0<h\leq \dfrac{1}{2}$ the following holds:
	\begin{align*}
	|L(f;x)-f(x)|&\leq |L(e_0;x)-1|\cdot |f(x)|+\dfrac{1}{h}|L(e_1-x;x)|\omega(f;h)\\
	&+\left[(Le_0)(x)+\dfrac{1}{2h^2}L((e_1-x)^2;x)\right]\omega_2(f;h).
	\end{align*}
	The condition $h\leq 1/2$ can be eliminated for operators $L$ reproducing linear functions.
	\end{theorem}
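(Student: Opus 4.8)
The plan is to reduce the whole estimate to a single pointwise inequality and then to exploit only the positivity of $L$. First I would isolate the contribution of the failure to preserve constants by writing
\[
L(f;x)-f(x)=f(x)\bigl(L(e_0;x)-1\bigr)+\bigl(L(f;x)-f(x)L(e_0;x)\bigr),
\]
so that the first summand already produces the term $|L(e_0;x)-1|\cdot|f(x)|$ and it remains to bound $L(f(\cdot)-f(x);x)=L(f;x)-f(x)L(e_0;x)$.

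Next I would split off a linear piece. Fix $0<h\le 1/2$ and choose a first-order difference quotient $c=c(f,x,h)$ — the symmetric quotient $\tfrac{1}{2h}\bigl(f(x+h)-f(x-h)\bigr)$ when $x\pm h\in[0,1]$, and a suitable one-sided quotient near the endpoints — arranged so that $|c|\le \tfrac1h\omega(f;h)$; this last bound follows from $\omega(f;2h)\le 2\omega(f;h)$. Setting $\phi(t):=f(t)-f(x)-c(t-x)$, so that $\phi(x)=0$, linearity gives
\[
L(f(\cdot)-f(x);x)=c\,L(e_1-x;x)+L(\phi;x).
\]
The first term is bounded by $\tfrac1h|L(e_1-x;x)|\,\omega(f;h)$, which is precisely the middle term of the claim. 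For the second term I would invoke the key pointwise estimate
\[
|\phi(t)|\le\Bigl(1+\frac{(t-x)^2}{2h^2}\Bigr)\omega_2(f;h),\qquad t\in[0,1],
\]
after which positivity and monotonicity of $L$ yield $|L(\phi;x)|\le L(|\phi|;x)\le\bigl[L(e_0;x)+\tfrac{1}{2h^2}L((e_1-x)^2;x)\bigr]\omega_2(f;h)$, i.e.\ exactly the last bracket. Summing the three contributions proves the theorem.

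The main obstacle is the pointwise inequality for $\phi$. Here I would compare $f$ with the secant line $\ell$ through $(x-h,f(x-h))$ and $(x+h,f(x+h))$, for which $c=\ell'$ and $\phi(t)=\bigl(f(t)-\ell(t)\bigr)-\bigl(f(x)-\ell(x)\bigr)$: the value of $\ell$ at $x$ differs from $f(x)$ by half a second difference, hence $|f(x)-\ell(x)|\le\tfrac12\omega_2(f;h)$, and for $|t-x|\le h$ the deviation of $f$ from this secant is again controlled by $\omega_2(f;h)$. For $|t-x|>h$ the point $t$ lies outside the interpolation interval, and the error of the extrapolated secant grows quadratically in $(t-x)/h$, which is exactly the source of the factor $\tfrac{(t-x)^2}{2h^2}$; this extrapolation estimate, obtained by iterating $\omega_2(f;\lambda h)\le(1+\lambda)^2\omega_2(f;h)$ or by telescoping second differences, is the technically delicate step. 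The endpoint cases, where $x\pm h$ leaves $[0,1]$, are what force the restriction $0<h\le 1/2$, which is also the range in which $\omega_2(f;h)$ is defined on $[0,1]$. Finally, if $L$ reproduces linear functions then $L(e_1-x;x)=0$, so the slope $c$ drops out of the estimate entirely; the constraint $|c|\le\tfrac1h\omega(f;h)$ and the boundary-adapted choice of $c$ become irrelevant, and the restriction $h\le 1/2$ may be removed, which explains the closing remark.
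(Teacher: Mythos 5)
First, a point of order: the paper contains no proof of this statement --- it is quoted from P\u{a}lt\u{a}nea \cite{8} --- so your proposal can only be compared with the proof in that reference, and in structure it coincides with it: split off $|L(e_0;x)-1|\cdot|f(x)|$, subtract a linear function whose slope $c$ is a difference quotient (symmetric where possible, shifted near the endpoints --- the sole source of the restriction $h\le 1/2$) with $|c|\le\frac1h\omega(f;h)$, and use positivity of $L$ to reduce everything to a pointwise inequality for $\phi(t)=f(t)-f(x)-c(t-x)$. You have also identified exactly the right target inequality, $|\phi(t)|\le\bigl(1+\frac{(t-x)^2}{2h^2}\bigr)\omega_2(f;h)$, whose constants are precisely those appearing in the theorem.

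The genuine gap is that this inequality --- which carries the entire content of the theorem --- is asserted rather than proved, and the arguments you sketch for it do not reach the stated constants. In the interior case $|t-x|\le h$, your own argument (Whitney-type bound $\omega_2(f;h)$ for the secant error plus $|f(x)-\ell(x)|\le\frac12\omega_2(f;h)$) gives only $\frac32\omega_2(f;h)$, which exceeds $\bigl(1+\frac{(t-x)^2}{2h^2}\bigr)\omega_2(f;h)$ whenever $|t-x|<h$; fed through $L$, this would replace $L(e_0;x)$ by $\frac32 L(e_0;x)$ in the conclusion. In the exterior case, iterating $\omega_2(f;\lambda h)\le(1+\lambda)^2\omega_2(f;h)$ produces a leading term of order $(t-x)^2/h^2$, i.e.\ twice the required $\frac{(t-x)^2}{2h^2}$, so that route cannot yield the theorem as stated either. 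Only your other suggestion, telescoping second differences, has the right constant: writing $\Delta_h^2f(y)=f(y+h)-2f(y)+f(y-h)$, for the symmetric quotient and $t=x+nh$, $n\in\mathbb{N}$, one has the identity
\[
f(t)-f(x)-c(t-x)=\sum_{k=1}^{n-1}(n-k)\,\Delta_h^2f(x+kh)+\frac{n}{2}\,\Delta_h^2f(x),
\]
whose coefficients sum to exactly $\frac{n^2}{2}\le 1+\frac{n^2}{2}$. But extending this to arbitrary real $t$ (with the additive $1$ absorbing the non-lattice error), and redoing it with the shifted quotient forced when $x$ lies within $h$ of an endpoint, is precisely the delicate case analysis that constitutes P\u{a}lt\u{a}nea's proof, and it is absent here. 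As it stands, your proposal is a correct reduction of the theorem to its key lemma, not a proof of the theorem.
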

\begin{theorem}\label{t1.4}
	For all $f\in C[0,1]$ and all $n\geq 4$, 
	$$ \|K_nf-f \|_{\infty}\leq \dfrac{1}{2\sqrt{n}}\omega_1\left(f;\dfrac{1}{\sqrt{n}}\right)+\dfrac{9}{8}\omega_2\left(f;\dfrac{1}{\sqrt{n}}\right).  $$
\end{theorem}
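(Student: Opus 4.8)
The plan is to apply the theorem of P\u alt\u anea stated above with $L=K_n$. The first step is to observe that $K_n$ reproduces constants, i.e. $K_n(e_0;x)=1$; this is immediate since $(n+1)\int_{k/(n+1)}^{(k+1)/(n+1)}dt=1$ and $\sum_k p_{n,k}(x)=1$. Consequently the term $|L(e_0;x)-1|\cdot|f(x)|$ drops out entirely. On the other hand $K_n$ does \emph{not} reproduce linear functions, so we cannot discard the restriction $h\le 1/2$; this is precisely what will force the hypothesis $n\ge 4$ once we set $h=1/\sqrt n$.

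Next I would compute the two moments that feed into the estimate. A direct integration gives $K_n(e_1;x)=\frac{2nx+1}{2(n+1)}$, hence $K_n(e_1-x;x)=\frac{1-2x}{2(n+1)}$ and therefore $|K_n(e_1-x;x)|\le\frac{1}{2(n+1)}$ on $[0,1]$. A slightly longer computation (using $\sum_k k\,p_{n,k}(x)=nx$ and $\sum_k k^2 p_{n,k}(x)=nx(1-x)+n^2x^2$, or equivalently a conditional-variance argument) yields
\[
K_n\big((e_1-x)^2;x\big)=\frac{3(n-1)x(1-x)+1}{3(n+1)^2}.
\]
Bounding $x(1-x)\le 1/4$ then gives $K_n\big((e_1-x)^2;x\big)\le\frac{3n+1}{12(n+1)^2}$.

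Finally I would substitute $h=1/\sqrt n$ (admissible for $n\ge4$, since then $h\le 1/2$) into P\u alt\u anea's inequality. The coefficient of $\omega_1$ becomes $\frac{1}{h}|K_n(e_1-x;x)|\le\frac{\sqrt n}{2(n+1)}\le\frac{1}{2\sqrt n}$, the last step being equivalent to $n\le n+1$. The coefficient of $\omega_2$ becomes
\[
1+\frac{1}{2h^2}K_n\big((e_1-x)^2;x\big)\le 1+\frac{n}{2}\cdot\frac{3n+1}{12(n+1)^2}=1+\frac{n(3n+1)}{24(n+1)^2}\le\frac{9}{8},
\]
where the last inequality reduces to $3n^2+n\le 3(n+1)^2$, which clearly holds for all $n$. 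Taking the supremum over $x\in[0,1]$ delivers the claimed estimate.

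The proof is essentially bookkeeping once P\u alt\u anea's theorem is available; the only genuine work is the evaluation of the second central moment and the verification that the particular choice $h=1/\sqrt n$ collapses the two $x$-dependent coefficients to the clean constants $\frac{1}{2\sqrt n}$ and $\frac{9}{8}$. The main point to watch is that, because $K_n$ fails to preserve linear functions, the admissibility condition $h\le 1/2$ is genuinely needed, and this is exactly the source of the restriction $n\ge 4$.
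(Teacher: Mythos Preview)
Your proof is correct and follows precisely the approach the paper indicates: apply P\u alt\u anea's theorem with $L=K_n$, use the exact first and second central moments of $K_n$ (which agree with those listed in the paper), and specialize $h=1/\sqrt{n}$, the restriction $n\ge 4$ arising from $h\le 1/2$. The paper itself does not spell out the details here but refers to \cite{5}; your computations of the coefficients $\frac{\sqrt n}{2(n+1)}\le\frac{1}{2\sqrt n}$ and $1+\frac{n(3n+1)}{24(n+1)^2}\le\frac{9}{8}$ are exactly what is needed.
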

This result can be extended to simultaneous approximation, see again \cite{5}
\begin{theorem}
	Let $r\in \mathbb{N}_0,n\geq 4, f\in C^r[0,1]$. Then
	\begin{align*}
	\| D^rK_nf-D^rf \|_{\infty}\leq\dfrac{(r+1)r}{2n}\| D^rf\|_{\infty}+\dfrac{r+1}{2\sqrt{n}}\omega_1\left(D^rf;\dfrac{1}{\sqrt{n}}\right)+\dfrac{9}{8}\omega_2\left(D^rf;\dfrac{1}{\sqrt{n}}\right).
	\end{align*}
\end{theorem}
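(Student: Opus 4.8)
The plan is to reduce the statement to a single application of P\u alt\u anea's Theorem 1.3 to a suitable positive linear operator. The starting point is the classical identification of $K_n$ with a derivative of a Bernstein operator: if $F(x)=\int_0^x f$, then $K_n(f;x)=\frac{d}{dx}B_{n+1}(F;x)$, since $(B_{n+1}F)'(x)=(n+1)\sum_{k=0}^n p_{n,k}(x)\bigl[F(\tfrac{k+1}{n+1})-F(\tfrac{k}{n+1})\bigr]$ and each bracket equals $\int_{k/(n+1)}^{(k+1)/(n+1)}f$. Differentiating $r$ more times and using the standard formula $B_m^{(j)}(g;x)=\frac{m!}{(m-j)!}\sum_{k=0}^{m-j}p_{m-j,k}(x)\,\Delta_{1/m}^{\,j}g(\tfrac{k}{m})$ with $m=n+1$, $j=r+1$, I get $D^rK_nf(x)=\frac{(n+1)!}{(n-r)!}\sum_{k=0}^{n-r}p_{n-r,k}(x)\,\Delta_{1/(n+1)}^{\,r+1}F(\tfrac{k}{n+1})$. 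The first and decisive step is to rewrite this difference via the integral representation $\Delta_h^{\,r+1}F(t)=h^{r+1}\int_{[0,1]^{r+1}}F^{(r+1)}(t+h(v_0+\cdots+v_r))\,dv$, valid for $F\in C^{r+1}$. As $F^{(r+1)}=D^rf$, this yields, with $h=1/(n+1)$,
$$D^rK_nf(x)=L_n^{[r]}\bigl(D^rf;x\bigr):=c_{n,r}\sum_{k=0}^{n-r}p_{n-r,k}(x)\int_{[0,1]^{r+1}}D^rf\!\left(\tfrac{k+v_0+\cdots+v_r}{n+1}\right)dv,$$
where $c_{n,r}:=\frac{(n+1)!}{(n-r)!\,(n+1)^{r+1}}=\prod_{i=1}^{r}\bigl(1-\tfrac{i}{n+1}\bigr)$. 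Since the argument of $D^rf$ always lies in $[0,1]$, the map $L_n^{[r]}$ is a positive linear operator on $C[0,1]$, and the problem is reduced to estimating $\|L_n^{[r]}g-g\|_\infty$ for $g:=D^rf$.

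The second step is to compute the three moments fed into Theorem 1.3. Writing $L_n^{[r]}g(x)=c_{n,r}\,\mathbb{E}[g(Y)]$ with $Y=\frac{1}{n+1}(\kappa+S)$, $\kappa\sim\mathrm{Bin}(n-r,x)$ and $S$ an independent sum of $r+1$ uniforms on $[0,1]$ (mean $\frac{r+1}{2}$, variance $\frac{r+1}{12}$), I obtain $L_n^{[r]}(e_0;x)=c_{n,r}$, $L_n^{[r]}(e_1-x;x)=c_{n,r}\frac{(r+1)(1-2x)}{2(n+1)}$, and
$$L_n^{[r]}\bigl((e_1-x)^2;x\bigr)=\frac{c_{n,r}}{(n+1)^2}\left[(n-r)x(1-x)+\frac{r+1}{12}+\frac{(r+1)^2(1-2x)^2}{4}\right].$$
From $0\le c_{n,r}\le1$ and $\sum_{i=1}^r\frac{i}{n+1}=\frac{r(r+1)}{2(n+1)}$ I read off $1-c_{n,r}\le\frac{r(r+1)}{2(n+1)}\le\frac{r(r+1)}{2n}$ and $|L_n^{[r]}(e_1-x;x)|\le\frac{r+1}{2(n+1)}$.

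The third step is to invoke Theorem 1.3 with $L=L_n^{[r]}$, $g=D^rf$ and $h=n^{-1/2}$; because $L_n^{[r]}$ does not reproduce linear functions the restriction $h\le\frac12$ is needed, which is precisely the hypothesis $n\ge4$. The term $|L(e_0;x)-1|\,|g(x)|$ is then bounded by $\frac{r(r+1)}{2n}\|D^rf\|_\infty$, and $\frac1h|L(e_1-x;x)|\,\omega_1(g;h)$ by $\sqrt n\cdot\frac{r+1}{2(n+1)}\,\omega_1\le\frac{r+1}{2\sqrt n}\,\omega_1(D^rf;n^{-1/2})$, matching the first two summands of the claim.

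The remaining, and main, obstacle is the $\omega_2$-coefficient $c_{n,r}+\frac n2 L_n^{[r]}((e_1-x)^2;x)$, which must be shown to be $\le\frac98$ uniformly in $x\in[0,1]$, $r\ge0$, $n\ge4$. The trick is to set $u:=x(1-x)\in[0,\frac14]$, so $(1-2x)^2=1-4u$ and the bracket becomes the affine function $\bigl[(n-r)-(r+1)^2\bigr]u+\frac{(r+1)^2}{4}+\frac{r+1}{12}$ of $u$, whose maximum over $[0,\frac14]$ is at an endpoint:
$$\max_{x\in[0,1]}\Bigl[(n-r)x(1-x)+\tfrac{r+1}{12}+\tfrac{(r+1)^2(1-2x)^2}{4}\Bigr]=\max\Bigl\{\tfrac{n-r}{4}+\tfrac{r+1}{12},\ \tfrac{(r+1)^2}{4}+\tfrac{r+1}{12}\Bigr\}.$$
In the regime $n\ge r^2+3r+1$ the first term wins, and the elementary inequality $n\bigl(3(n-r)+(r+1)\bigr)\le3(n+1)^2$ (true for all $n\ge1$) together with $c_{n,r}\le1$ bounds the coefficient by $1+\frac{n(3n+1)}{24(n+1)^2}\le\frac98$; the case $r=0$ here recovers Theorem 1.4 and fixes the constants. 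The genuinely delicate case is $r\gtrsim\sqrt n$, where $(r+1)^2$ is large and one cannot discard $c_{n,r}$: the argument must play the essentially exponential decay $c_{n,r}=\prod_{i=1}^r(1-\frac{i}{n+1})$ against the polynomial growth of the bracket, establishing $c_{n,r}\cdot\frac{n(r+1)(3r+4)}{24(n+1)^2}\le\frac18$ and hence a total $\le1+\frac18=\frac98$. This balancing of the decaying $c_{n,r}$ against the growing second moment is where the real work lies; once it is done, Theorem 1.3 delivers the stated inequality.
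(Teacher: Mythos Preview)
The paper itself gives no proof of this theorem; it simply refers to \cite{5} (Gonska--Heilmann--Ra\c sa, \emph{Kantorovich operators of order $k$}). Your approach is exactly the natural one and is essentially the method of that reference: rewrite $D^rK_nf=D^{r+1}B_{n+1}F$, use the Hermite--Genocchi integral representation of the $(r{+}1)$-st forward difference to exhibit $D^rK_nf$ as a positive linear operator $L_n^{[r]}$ applied to $D^rf$ (these $L_n^{[r]}$ are, up to the normalising factor $c_{n,r}$, the ``order-$(r{+}1)$ Kantorovich operators'' of \cite{5}), and then invoke P\u alt\u anea's Theorem~2.3. Your moment computations are correct, the bounds on the $\|D^rf\|_\infty$ and $\omega_1$ terms are clean, and the identification of the hypothesis $n\ge 4$ with the restriction $h=n^{-1/2}\le\tfrac12$ is right. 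The case $r>n$, which you do not mention, is trivial since then $D^rK_nf\equiv 0$ and $\frac{r(r+1)}{2n}\ge 1$.

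The one place that remains genuinely incomplete is your Case~2 for the $\omega_2$-coefficient. You correctly isolate the required inequality and name the mechanism (exponential decay of $c_{n,r}$ versus polynomial growth of the second moment), but you stop short of proving it. The claim is true, and in fact stronger than you need: in Case~2 one has roughly $r\gtrsim\sqrt n$, and writing $r\sim\alpha\sqrt n$ with $\alpha\ge 1$ gives, for large $n$,
\[
c_{n,r}\Bigl[1+\tfrac{n(r+1)(3r+4)}{24(n+1)^2}\Bigr]\;\approx\;e^{-\alpha^2/2}\Bigl(1+\tfrac{\alpha^2}{8}\Bigr),
\]
which is decreasing in $\alpha>0$ and hence $\le e^{-1/2}\cdot\tfrac{9}{8}<1$; the finitely many small-$(n,r)$ pairs with $4\le n<r^2+3r+1$ can be checked directly. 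So your outline is sound, but this last estimate should be written out rather than deferred.
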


\section{A quantitative Voronovskaya result }

This part has its predecessor in a hardly known booklet of Videnskij in which a quantitative version of the well-known Voronovskaya theorem for the classical Bernstein operators can be found (see \cite{Vid}). This estimate was generalized and improved in \cite{6}. An application
for Kantorovich operators was given in \cite{5}. Here we improve it as follows:

\begin{theorem}\label{t1.6} For $n\geq 1$ and $f\in C^2[0,1]$, one has
		\begin{align}\label{ec3}
	&	\left\|n\left(K_nf-f\right)-\dfrac{1}{2}\left(Xf^{\prime}\right)^{\prime} \right\|_{\infty}\leq \dfrac{2}{3(n+1)}\left(\dfrac{3}{4}\|f^{\prime}\|_{\infty}+\|f^{\prime\prime}\|_{\infty}\right)\nonumber\\
	&+\dfrac{9}{32}\left\{\dfrac{2}{\sqrt{n+1}}\omega_1\left(f^{\prime\prime};\dfrac{1}{\sqrt{n+1}}\right)+\omega_2\left(f^{\prime\prime};\dfrac{1}{\sqrt{n+1}}\right)\right\},
	\end{align}
	where $X=x(1-x)$ and $X^{\prime}=1-2x$, $x\in[0,1]$.
	\end{theorem}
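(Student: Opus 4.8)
The plan is to expand $f$ to second order about $x$ and to read the limiting differential operator off the low-order moments of $K_n$. Put $\mu_j(x):=K_n((e_1-x)^j;x)$ and
\[
\bar f_x(t):=f(t)-f(x)-f'(x)(t-x)-\tfrac12 f''(x)(t-x)^2 .
\]
Since $K_ne_0=1$ and $(Xf')'=X'f'+Xf''$, the quantity to be estimated splits as
\[
n\bigl(K_nf(x)-f(x)\bigr)-\tfrac12(Xf')'(x)=\Bigl(n\mu_1-\tfrac12 X'\Bigr)f'(x)+\Bigl(\tfrac{n}{2}\mu_2-\tfrac12 X\Bigr)f''(x)+nK_n(\bar f_x;x).
\]
First I would compute $\mu_1$ and $\mu_2$ explicitly: integrating the monomials over $[\tfrac{k}{n+1},\tfrac{k+1}{n+1}]$ and using $\sum_k k\,p_{n,k}=nx$ and $\sum_k k^2p_{n,k}=n(n-1)x^2+nx$ gives $\mu_1=\frac{X'}{2(n+1)}$ and $\mu_2=\frac{3(n-1)X+1}{3(n+1)^2}$.

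The two coefficient terms are then purely algebraic and, pleasantly, reproduce the first summand of (\ref{ec3}) exactly. From the value of $\mu_1$ one gets $n\mu_1-\tfrac12X'=-\frac{X'}{2(n+1)}$, so its modulus is at most $\frac1{2(n+1)}$ because $|X'|\le1$. From the value of $\mu_2$ one gets $\frac{n}{2}\mu_2-\tfrac12X=\frac{n-3(3n+1)X}{6(n+1)^2}$, whose modulus over $0\le X\le\tfrac14$ is largest at $X=\tfrac14$ and equals $\frac{5n+3}{24(n+1)^2}\le\frac{2}{3(n+1)}$. Writing $\frac1{2(n+1)}=\frac{2}{3(n+1)}\cdot\frac34$, the coefficient part is bounded by $\frac{2}{3(n+1)}\bigl(\frac34\|f'\|_\infty+\|f''\|_\infty\bigr)$.

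The remaining term $nK_n(\bar f_x;x)$ is the heart of the matter. With the substitution $u=x+s(t-x)$ one writes $\bar f_x(t)=(t-x)^2\int_0^1(1-s)\bigl[g(x+s(t-x))-g(x)\bigr]\,ds$ with $g:=f''$, so $K_n(\bar f_x;x)$ is the positive functional $\phi\mapsto K_n((e_1-x)^2\phi;x)$ applied to a difference of $g$. Splitting $g(x+y)-g(x)$ into its even part $\tfrac12[g(x+y)+g(x-y)-2g(x)]$, controlled by $\omega_2(g,|y|)$, and its odd part $\tfrac12[g(x+y)-g(x-y)]$, I would apply the quantitative Voronovskaya estimate of \cite{6} (a P\u alt\u anea-type refinement of Videnskij's inequality). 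This bounds $|K_n(\bar f_x;x)|$ by a free-parameter expression of the form $\frac{1}{6h}\bigl|K_n((e_1-x)^3;x)\bigr|\,\omega_1(g;h)+\bigl[\tfrac12\mu_2+\frac{1}{24h^2}K_n((e_1-x)^4;x)\bigr]\omega_2(g;h)$: the even part contributes the $\omega_2$-term with the even moments $\mu_2$ and $K_n((e_1-x)^4;x)$, while the odd part contributes the $\omega_1$-term carried by the signed third moment $K_n((e_1-x)^3;x)$.

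The main obstacle, and the only place where nontrivial computation enters, is the sharp control of the third and fourth central moments. Expanding $K_n(e_3;x)$ and $K_n(e_4;x)$ through the factorial moments of the binomial distribution, one finds that both are $O(n^{-2})$; since they enter the bound through $h^{-1}$ and $h^{-2}$ respectively, at the calibration $h=\frac1{\sqrt{n+1}}$ the odd-moment term is of lower order, which explains why, after multiplication by $n$, the $\omega_1$-term survives only with the small factor $\frac{2}{\sqrt{n+1}}$. Tracking the constants, the $\omega_2$-coefficient $n\bigl[\tfrac12\mu_2+\frac{1}{24h^2}K_n((e_1-x)^4;x)\bigr]$ is bounded by $\frac{9}{32}$ -- the counterpart of the constant $\frac98$ appearing in Theorem \ref{t1.4} -- by means of the uniform bound $\tfrac12 n\mu_2=\frac{n(3(n-1)X+1)}{6(n+1)^2}\le\frac18$ (its supremum, approached at $X=\tfrac14$ as $n\to\infty$) together with the fourth-moment estimate. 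Assembling the coefficient part and this remainder estimate yields (\ref{ec3}); the delicate point throughout is keeping the $\omega_2$-constant without slack, whereas the odd moment makes the $\omega_1$-contribution automatically subordinate.
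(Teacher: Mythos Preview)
Your overall architecture coincides with the paper's: write
\[
n(K_nf-f)-\tfrac12(Xf')'=\bigl(n\mu_1-\tfrac12X'\bigr)f'+\bigl(\tfrac{n}{2}\mu_2-\tfrac12X\bigr)f''+nK_n(\bar f_x;\cdot),
\]
bound the two coefficient terms algebraically, and control the Taylor remainder $K_n(\bar f_x;x)=K_n(f;x)-f(x)-\mu_1 f'(x)-\tfrac12\mu_2 f''(x)$ via the Gonska--Ra\c sa estimate of \cite{6}. Your treatment of the coefficient part is correct and reproduces the term $\dfrac{2}{3(n+1)}\bigl(\tfrac34\|f'\|_\infty+\|f''\|_\infty\bigr)$ exactly; the paper obtains the same term, but only at the end, by a triangle-inequality adjustment after applying \cite[Theorem~3]{6} with the actual moments $\mu_1,\mu_2$ in place.

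The gap is in the remainder step. The inequality from \cite[Theorem~3]{6} reads
\[
|K_n(\bar f_x;x)|\le \mu_2\left\{\frac{|\mu_3|}{\mu_2}\,\frac{5}{6h}\,\omega_1(f'';h)+\Bigl(\frac34+\frac{\mu_4}{\mu_2}\,\frac{1}{16h^2}\Bigr)\omega_2(f'';h)\right\},
\]
with constants $\bigl(\tfrac{5}{6h},\tfrac34,\tfrac{1}{16h^2}\bigr)$, not the $\bigl(\tfrac{1}{6h},\tfrac12,\tfrac{1}{24h^2}\bigr)$ you write. Your even/odd splitting is only a heuristic for where the $\omega_1$- and $\omega_2$-terms come from; it does not produce those sharper constants, and as stated it ignores that the reflected point $x-s(t-x)$ may lie outside $[0,1]$, so the second difference is not always defined. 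With the correct constants the paper then uses the moment-ratio bounds
\[
\frac{|\mu_3|}{\mu_2}\le\frac{5}{2(n+1)},\qquad \frac{\mu_4}{\mu_2}\le\frac{3(n+2)}{(n+1)^2},
\]
and the choice $h=(n+1)^{-1/2}$ to obtain, after multiplying by $n$, precisely the factor $\tfrac{9}{32}$ in front of $\{\,2(n+1)^{-1/2}\omega_1+\omega_2\,\}$. With your constants the same calibration would give a number strictly smaller than $\tfrac{9}{32}$, so the value $\tfrac{9}{32}$ would not be the natural outcome of your computation; more importantly, the intermediate estimate you rely on is not established. Replace your remainder bound by the one from \cite[Theorem~3]{6} and use the two ratio bounds above, and the argument goes through exactly as in the paper.
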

	\begin{proof}
	From \cite[Theorem 3]{6} we get
		\begin{align*}
		&\left| K_n(f;x)-f(x)-K_n(t-x;x)f^{\prime}(x)-\dfrac{1}{2}K_n\left((e_1-x)^2;x\right)f^{\prime\prime}(x)\right|\\
		&\leq K_n((e_1-x)^2;x)\left\{\dfrac{|K_n((e_1-x)^3;x)|}{K_n((e_1-x)^2;x)}\dfrac{5}{6h}\omega_1(f^{\prime\prime};h)+\left(\dfrac{3}{4}+\dfrac{K_n((e_1-x)^4;x)}{K_n((e_1-x)^2;x)}\cdot\dfrac{1}{16h^2}\right)\omega_2(f^{\prime\prime};h)\right\}.
		\end{align*}
		Using the central moments up to order 4 for Kantorovich operators, namely
		\begin{align*}
		&K_n\left(t-x;x\right)=\dfrac{1-2x}{2(n+1)},\\
		& K_n\left((t-x)^2;x\right)=\dfrac{1}{(n+1)^2}\left\{x(1-x)(n-1)+\frac{1}{3}\right\},\\
		&K_n\left((t-x)^3;x\right)=\dfrac{1-2x}{4(n+1)^3}\left\{10x(1-x)n+2x^2-2x+1\right\},\\
		&K_n\left((t-x)^4;x\right)=\dfrac{1}{(n+1)^4}\left\{3x^2(1-x)^2n^2+5x(1-x)(1-2x)^2n+x^4-2x^3+2x^2-x+\dfrac{1}{5}\right\},
		\end{align*}
		we obtain
		\begin{align*}
		\dfrac{|K_n\left((t-x)^3;x\right)|}{K_n\left((t-x)^2;x\right)}\leq \dfrac{5}{2(n+1)};\,\,\,\, \dfrac{|K_n\left((t-x)^4;x\right)|}{K_n\left((t-x)^2;x\right)}\leq \dfrac{3(n+2)}{(n+1)^2}.
		\end{align*}
		Therefore, the following inequality holds
		\begin{align*}
			&	\left|K_n(f;x)-f(x)-\dfrac{1-2x}{2(n+1)}f^{\prime}(x)-\dfrac{1}{2}\left[\dfrac{x(1-x)(n-1)}{(n+1)^2}+\dfrac{1}{3(n+1)^2}\right]f^{\prime\prime}(x)\right|\\
			&\leq \left[x(1-x)\dfrac{n-1}{(n+1)^2}+\dfrac{1}{3(n+1)^2}\right]\left\{\dfrac{25}{12h(n+1)}\omega_1(f^{\prime\prime};h)+\left(\dfrac{3}{4}+\dfrac{3(n+2)}{16h^2(n+1)^2}\right)\omega_2(f^{\prime\prime};h)\right\}
		\end{align*}
		and for $h=\dfrac{1}{\sqrt{n+1}}$ we obtain, after multiplying both sides by $n$,
			\begin{align*}
		&\left|n\left[K_n(f;x)-f(x)\right]-\dfrac{n}{n+1}\left(\dfrac{1}{2}-x\right)f^{\prime}(x)-\dfrac{1}{2}\left[x(1-x)\dfrac{n(n-1)}{(n+1)^2}+\dfrac{n}{3(n+1)^2}\right]f^{\prime\prime}(x)\right|	 \\
		&\leq \dfrac{9}{32}\left\{\dfrac{2}{\sqrt{n+1}}\omega_1\left(f^{\prime\prime};\dfrac{1}{\sqrt{n+1}}\right)+\omega_2\left(f^{\prime\prime};\dfrac{1}{\sqrt{n+1}}\right)\right\}.
		\end{align*}
		We can write
		\begin{align*}
		&\left|n\left[K_n(f;x)-f(x)\right]-\dfrac{1-2x}{2}f^{\prime}(x)-\dfrac{1}{2}x(1-x)f^{\prime\prime}(x)\right|\\
		&\leq\left|n\left[K_n(f;x)-f(x)\right]-\dfrac{n}{n+1}\left(\dfrac{1}{2}-x\right)f^{\prime}(x)-\dfrac{1}{2}\left[x(1-x)\dfrac{n(n-1)}{(n+1)^2}+\dfrac{n}{3(n+1)^2}\right]f^{\prime\prime}(x)\right|	 \\
		&+\left|\dfrac{1-2x}{2}\dfrac{1}{n+1}f^{\prime}(x)+\dfrac{1}{2}x(1-x)\dfrac{3n+1}{(n+1)^2}f^{\prime\prime}(x)-\dfrac{n}{6(n+1)^2}f^{\prime\prime}(x)\right|\\
		&\leq\dfrac{9}{32}\left\{\dfrac{2}{\sqrt{n+1}}\omega_1\left(f^{\prime\prime};\dfrac{1}{\sqrt{n+1}}\right)+\omega_2\left(f^{\prime\prime};\dfrac{1}{\sqrt{n+1}}\right)\right\}+\dfrac{2}{3(n+1)}\left(\dfrac{3}{4}\|f^{\prime}\|_{\infty}+\|f^{\prime\prime}\|_{\infty}\right).
		\end{align*}
	\end{proof}

\section{Chebyshev-Gr\"uss inequality for Kantorovich operators}
In a 2011 paper Ra\c sa and the present authors \cite{1} published the following Gr\"uss-type inequality for positive linear operators reproducing constant functions. We give below the improved form of Rusu given in \cite{9}:
\begin{theorem} Let $H:C[a,b]\to C[a,b]$ be positive, linear and satisfy $He_0=e_0$. Put
	$$ D(f,g;x):=H(fg;x)-H(f;x)\cdot H(g;x). $$
	Then for $f,g\in C[a,b]$ and $x\in[a,b]$ fixed one has
	$$ |D(f,g;x)|\leq\dfrac{1}{4}\tilde\omega\left(f;2\sqrt{H\left((e_1-x)^2;x\right)}\right)\cdot \tilde\omega\left(g;2\sqrt{H\left((e_1-x)^2;x\right)}\right). $$
	Here $\tilde\omega$ is the least concave majorant of the first order modulus $\omega_1$ given by
	$$ \tilde{\omega}(f;t)=\sup\left\{\dfrac{(t-x)\omega_1(f;y)+(y-t)\omega_1(f;x)}{y-x}: 0\leq x\leq t\leq y\leq b-a, x\ne y\right\}. $$
\end{theorem}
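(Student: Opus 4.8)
The plan is to use that, for each fixed $x$, the map $f\mapsto H(f;x)$ is a positive linear functional on $C[a,b]$ with $H(e_0;x)=1$, hence by the Riesz representation theorem it is integration against a probability measure $\mu_x$ on $[a,b]$. Under this identification $D(f,g;x)$ becomes exactly the covariance of $f$ and $g$ with respect to $\mu_x$, so the Cauchy--Schwarz inequality in $L^2(\mu_x)$ gives
\[
|D(f,g;x)|\le \mathrm{sd}(f)\,\mathrm{sd}(g),\qquad \mathrm{sd}(u):=\left(H(u^2;x)-H(u;x)^2\right)^{1/2}.
\]
Thus the whole theorem reduces to the one--variable estimate
\[
\mathrm{sd}(u)\le \tfrac12\,\tilde\omega\left(u;2\sqrt{H((e_1-x)^2;x)}\right),
\]
after which multiplying the two instances (for $f$ and for $g$) produces both the factor $\tfrac14$ and the stated arguments.

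To prove this standard--deviation estimate I would first note three elementary facts about the seminorm $\mathrm{sd}$: it is subadditive, being the $L^2(\mu_x)$--norm of the mean--zero part $u-H(u;x)$; it satisfies $\mathrm{sd}(u)\le\|u\|_\infty$; and for a smooth $v$ one has, with $\sigma^2:=H((e_1-x)^2;x)$,
\[
\mathrm{sd}(v)^2\le H\left((v-v(x))^2;x\right)\le \|v'\|_\infty^2\,H((e_1-x)^2;x)=\sigma^2\|v'\|_\infty^2 .
\]
Then for any decomposition $u=(u-v)+v$ with $v\in C^1[a,b]$, subadditivity yields
\[
\mathrm{sd}(u)\le \mathrm{sd}(u-v)+\mathrm{sd}(v)\le \|u-v\|_\infty+\sigma\|v'\|_\infty .
\]
Taking the infimum over $v$ on the right produces precisely the $K$--functional $K(u,\sigma):=\inf_{v\in C^1}\{\|u-v\|_\infty+\sigma\|v'\|_\infty\}$, so $\mathrm{sd}(u)\le K(u,\sigma)$. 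I would then close the argument by invoking the classical identity $K(u,t)=\tfrac12\tilde\omega(u;2t)$ relating this $K$--functional to the least concave majorant of $\omega_1$ (cf. Peetre \cite{Pee}), which turns $K(u,\sigma)$ into $\tfrac12\tilde\omega(u;2\sigma)$.

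The main obstacle, and the step carrying the sharp constants, is the passage from the two crude bounds $\mathrm{sd}(u)\le\|u\|_\infty$ and $\mathrm{sd}(v)\le\sigma\|v'\|_\infty$ to the modulus form: one must recognise that their combination over all smooth $v$ is exactly a $K$--functional and then appeal to the nontrivial equality $K(u,t)=\tfrac12\tilde\omega(u;2t)$. It is this identity that is responsible both for the factor $\tfrac14$ and for the doubling of the radius to $2\sqrt{H((e_1-x)^2;x)}$; sharpness is witnessed by the two--point measure placing equal mass at $x\pm\sigma$, for which the gap between the mass points is exactly $2\sigma$ and both Cauchy--Schwarz and the variance bound are attained. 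A cheaper variant, at the cost of sharpness, is to skip the $K$--functional altogether and estimate $\mathrm{sd}(u)\le\tilde\omega(u;\sigma)$ directly by Jensen's inequality applied to the concave majorant; this already yields a Gr\"uss inequality, but with the weaker constant and radius.
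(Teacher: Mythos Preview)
The paper does not give its own proof of this theorem: it is quoted from \cite{1} in the improved form of \cite{9}, and Remark~4.1 points to \cite{8} for the identity between $\tilde\omega$ and the relevant $K$-functional. Your argument---Cauchy--Schwarz on the covariance form of $D(f,g;x)$, followed by the standard--deviation bound $\mathrm{sd}(u)\le K(u,\sigma)=\tfrac12\tilde\omega(u;2\sigma)$ obtained via the splitting $u=(u-v)+v$---is correct and is precisely the route these references take; in particular the $K$-functional identity you invoke is exactly the ingredient flagged in the paper's remark. So your proposal is both sound and in line with the intended proof.
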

\begin{remark} For an accesible proof of the equality between $\tilde{\omega}$ and a certain $K$-functional used in the proof of the above theorem see \cite{8}.
	\end{remark}
Hence the non-multiplicativity of Kantorovich operators can be explained as in
\begin{theorem}
	For the classical Kantorovich operators $K_n:C[0,1]\to C[0,1]$ one has the uniform inequality
	\begin{align}
	\|K_n(fg)-K_nfK_ng\|_{\infty}\leq\dfrac{1}{4}\tilde{\omega}\left(f;2\sqrt{\dfrac{1}{2(n+1)}}\right)\tilde{\omega}\left(g;2\sqrt{\dfrac{1}{2(n+1)}}\right),n\geq 1,
	\end{align}
	for all $f,g\in C[0,1]$.
\end{theorem}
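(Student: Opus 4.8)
The plan is to apply the general Chebyshev-Gr\"uss inequality stated just above with $H=K_n$ and $[a,b]=[0,1]$, and then to control the resulting second central moment uniformly in $x$. The whole analytic weight of the argument already sits in that quoted inequality, so the task reduces to verifying the hypotheses and doing the moment bookkeeping.

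First I would check that $K_n$ meets the hypotheses of that theorem. Linearity and positivity are immediate from the definition, and reproduction of constants follows from $K_n(e_0;x)=(n+1)\sum_{k=0}^n p_{n,k}(x)\cdot\frac{1}{n+1}=\sum_{k=0}^n p_{n,k}(x)=1$, i.e. $K_n e_0=e_0$. Hence the theorem applies pointwise and gives, for each fixed $x\in[0,1]$,
$$|K_n(fg;x)-K_n(f;x)K_n(g;x)|\leq\dfrac14\tilde\omega\!\left(f;2\sqrt{K_n((e_1-x)^2;x)}\right)\tilde\omega\!\left(g;2\sqrt{K_n((e_1-x)^2;x)}\right).$$

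Next I would invoke the explicit second central moment already recorded in the proof of Theorem \ref{t1.6}, namely $K_n((e_1-x)^2;x)=\frac{1}{(n+1)^2}\{(n-1)x(1-x)+\frac13\}$, and bound it uniformly. Since $n\geq 1$ gives $n-1\geq 0$ and $x(1-x)\leq\frac14$ on $[0,1]$, the numerator is maximised at $x=\frac12$, so
$$\sup_{x\in[0,1]}K_n((e_1-x)^2;x)=\dfrac{3n+1}{12(n+1)^2}\leq\dfrac{1}{2(n+1)},$$
the last inequality being equivalent to $3n+1\leq 6(n+1)$, which holds for all $n\geq 1$.

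Finally, because $\tilde\omega(f;\cdot)$ is nondecreasing (it is the least concave majorant of the nondecreasing function $\omega_1(f;\cdot)$), the moment bound lets me replace the $x$-dependent argument $2\sqrt{K_n((e_1-x)^2;x)}$ by the uniform value $2\sqrt{\frac{1}{2(n+1)}}$ on the right-hand side; taking the supremum over $x\in[0,1]$ on the left then yields the claimed uniform estimate. I do not expect a genuine obstacle, as the remaining work is routine. The one point requiring a little care is exactly this monotonicity of $\tilde\omega$ in its second argument, which is what legitimises passing from the sharp pointwise moment $\frac{3n+1}{12(n+1)^2}$ to the cleaner uniform constant $\frac{1}{2(n+1)}$ appearing in the statement.
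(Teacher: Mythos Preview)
Your proposal is correct and follows essentially the same approach as the paper: apply the general Chebyshev--Gr\"uss inequality with $H=K_n$, use the explicit second central moment $K_n((e_1-x)^2;x)=\frac{1}{(n+1)^2}\{(n-1)x(1-x)+\frac13\}$, and bound it uniformly by $\frac{1}{2(n+1)}$. You have merely supplied a bit more detail (the verification of $K_ne_0=e_0$, the exact supremum $\frac{3n+1}{12(n+1)^2}$, and the monotonicity of $\tilde\omega$) than the paper's own terse proof.
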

\begin{proof}
	The most precise upper bound is obtained if we use the exact representation
	$$ K_n\left((t-x)^2;x\right)=\dfrac{1}{(n+1)^2}\left\{(n-1)x(1-x)+\dfrac{1}{3}\right\}.  $$
	Close to $x=0,1$ this shows the familiar endpoint improvement. For shortness we use the estimate 
	$$ K_n\left((t-x)^2;x\right)\leq\dfrac{1}{2(n+1)}.  $$
\end{proof}

\section{Gr\"uss-Voronovskaya theorems}
The first Gr\"uss-Voronovskaya theorem for classical Bernstein operators was given by Gal and Gonska \cite{4}. In Theorem 2.1 of this paper a quantitative form was given (see also Theorem 2.5 there). The other examples in \cite{4} deal with operators reproducing linear functions; this is not the case for the Kantorovich mappings. The limit for $K_n$ was identified recently in \cite{2} to be the same as in the Bernstein case, namely
$$ f^{\prime}(x)g^{\prime}(x)x(1-x), \textrm{ for } f,g\in C^2[0,1].$$
Our first quantitative version is given in
\begin{theorem}\label{t2.3} Let $f,g\in C^2[0,1]$. Then for each $x\in [0,1]$
	\begin{align*}&\|n\left[K_n(fg)-K_nf\cdot K_ng\right]-Xf^{\prime}g^{\prime}  \|_{\infty}=\left\{\begin{array}{l}
	o(1),\,\, f,g\in C^2[0,1],\\
	\vspace{-0.4cm}\\
	{\cal O}\left(\dfrac{1}{\sqrt{n}}\right),\,\,f,g\in C^3[0,1],\\
	\vspace{-0.4cm}\\
	{\cal O}\left(\dfrac{1}{{n}}\right),\,\,f,g\in C^4[0,1].
	\end{array}\right.
	\end{align*}
	\end{theorem}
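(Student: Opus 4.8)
The plan is to reduce the Grüss quantity to the quantitative Voronovskaya estimate of Theorem~\ref{t1.6}. Throughout I write $V_n(h;x):=n\bigl(K_n(h;x)-h(x)\bigr)$ and let
$$R_n(h;x):=V_n(h;x)-\tfrac12\bigl(Xh'\bigr)'(x)$$
denote the Voronovskaya remainder, so that Theorem~\ref{t1.6} supplies, for every $h\in C^2[0,1]$, the uniform bound
$$\|R_n(h)\|_\infty\le\frac{2}{3(n+1)}\Bigl(\tfrac34\|h'\|_\infty+\|h''\|_\infty\Bigr)+\frac{9}{32}\Bigl\{\frac{2}{\sqrt{n+1}}\,\omega_1\bigl(h'';\tfrac{1}{\sqrt{n+1}}\bigr)+\omega_2\bigl(h'';\tfrac{1}{\sqrt{n+1}}\bigr)\Bigr\}.$$
The point is that the rate of $\|R_n(h)\|_\infty$ is governed entirely by $\omega_1(h'';\cdot)$ and $\omega_2(h'';\cdot)$.

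First I would record an exact algebraic splitting. From $K_nf\,K_ng-fg=(K_nf-f)K_ng+f(K_ng-g)$ one gets, with no surviving quadratic error,
$$n\bigl[K_n(fg)-K_nf\,K_ng\bigr]=V_n(fg)-V_n(f)\,K_ng-f\,V_n(g).$$
Next I verify the pointwise Leibniz identity
$$\bigl(X(fg)'\bigr)'-g\,(Xf')'-f\,(Xg')'=2Xf'g',$$
equivalently $Xf'g'=\tfrac12(X(fg)')'-\tfrac12 g(Xf')'-\tfrac12 f(Xg')'$, which identifies the limit. Subtracting $Xf'g'$ and regrouping — the only delicate point being that $K_ng\neq g$, so in the middle term I write $V_n(f)K_ng-\tfrac12 g(Xf')'=R_n(f)K_ng+\tfrac12(Xf')'(K_ng-g)$ — yields the master decomposition
$$n\bigl[K_n(fg)-K_nf\,K_ng\bigr]-Xf'g'=R_n(fg)-R_n(f)\,K_ng-\tfrac12(Xf')'\,(K_ng-g)-f\,R_n(g).$$

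It then remains to estimate the four terms uniformly. Since $K_n$ is positive and $K_ne_0=e_0$, we have $\|K_ng\|_\infty\le\|g\|_\infty$, so the first, second and fourth terms are bounded by $\|R_n(fg)\|_\infty$, $\|g\|_\infty\|R_n(f)\|_\infty$ and $\|f\|_\infty\|R_n(g)\|_\infty$ respectively, each controlled by the displayed bound. The third term is bounded by $\tfrac12\|(Xf')'\|_\infty\|K_ng-g\|_\infty$, and Theorem~\ref{t1.4} together with $\omega_2(g;t)\le\tfrac12 t^2\|g''\|_\infty$ gives $\|K_ng-g\|_\infty=O(1/n)$ already for $g\in C^2[0,1]$. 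Hence this term is harmless in all three cases, and the genuine rate is dictated solely by the remainders $R_n(fg),R_n(f),R_n(g)$.

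Feeding in the smoothness closes the argument, noting that $fg$ inherits the regularity of $f$ and $g$. For $f,g\in C^2[0,1]$ the moduli of the relevant second derivatives tend to $0$, so every $R_n$ term is $o(1)$. For $f,g\in C^3[0,1]$ the bounds $\omega_1(h'';t)\le t\|h'''\|_\infty$ and $\omega_2(h'';t)\le 2t\|h'''\|_\infty$ give $\|R_n(h)\|_\infty=O(1/\sqrt n)$. For $f,g\in C^4[0,1]$ the sharper $\omega_2(h'';t)\le\tfrac12 t^2\|h^{(4)}\|_\infty$ gives $\|R_n(h)\|_\infty=O(1/n)$. This produces the claimed $o(1)$, $\mathcal O(n^{-1/2})$ and $\mathcal O(n^{-1})$ bounds. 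The main obstacle is conceptual rather than computational: one must not expand $V_n(f)K_ng$ directly toward its limit $\tfrac12(Xf')'g$, but instead peel off $R_n(f)$ and separately absorb the $O(1/n)$ contribution of $K_ng-g$, which is exactly what prevents the middle term from degrading the rate in the lowest-smoothness case.
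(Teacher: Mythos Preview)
Your proof is correct and follows the same overall strategy as the paper: rewrite the Gr\"uss expression as a combination of Voronovskaya remainders governed by Theorem~\ref{t1.6}, plus a lower-order correction that is already $O(1/n)$ for $C^2$ data, and then read off the three rates from the behaviour of $\omega_1(h'';\cdot)$ and $\omega_2(h'';\cdot)$.

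The algebraic decomposition, however, is organised differently. The paper keeps the splitting symmetric in $f$ and $g$, producing $R_n(fg)-f\,R_n(g)-g\,R_n(f)$ together with the product correction $n\,(g-K_ng)(K_nf-f)$, and pays for this symmetry by having to verify that a block of leftover terms cancels identically. Your asymmetric route---writing $V_n(f)K_ng$ and peeling off $R_n(f)K_ng$ and $\tfrac12(Xf')'(K_ng-g)$---arrives at a four-term identity directly, with no cancellation to check; the contraction $\|K_ng\|_\infty\le\|g\|_\infty$ then replaces the paper's product-of-errors estimate. Both correction terms are $O(1/n)$ via Theorem~\ref{t1.4}, so the concluding case analysis is identical. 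One cosmetic slip: the bound $\omega_2(\varphi;t)\le\tfrac12 t^2\|\varphi''\|_\infty$ you invoke twice should carry constant $1$ rather than $\tfrac12$; this does not affect any of the stated orders.
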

 \begin{proof}
 	We proceed as in \cite{4} by creating first three Voronovskaya-type expressions from the difference in question plus the remaining quantities. Recall that the Voronovskaya limit for Kantorovich operators is
 	$$ \dfrac{1}{2}(Xf^{\prime})^{\prime}=\dfrac{1}{2}Xf^{\prime\prime}(x)+\dfrac{1}{2}X^{\prime}f^{\prime}(x), $$
 	where $X:=x(1-x)$, so $X^{\prime}=1-2x$.
 	
 	For $f,g\in C^2[0,1]$ one has
 	\begin{align*}
 	& K_n(fg;x)-K_n(f;x)K_n(g;x)-\dfrac{1}{n}Xf^{\prime}(x)g^{\prime}(x)\\
 	&=K_n(fg;x)-(fg)(x)-\dfrac{1}{2n}\left(X(fg)^{\prime}\right)^{\prime}\\
 	&-f(x)\left[K_n(g;x)-g(x)-\dfrac{1}{2n}(Xg^{\prime})^{\prime}\right]
 	-g(x)\left[K_n(f;x)-f(x)-\dfrac{1}{2n}(Xf^{\prime})^{\prime}\right]\\
 	&+\left[g(x)-K_n(g;x)\right]\left[K_n(f;x)-f(x)\right]\\
 	&-K_n(f;x)K_n(g;x)-\dfrac{1}{n}Xf^{\prime}g^{\prime}+(fg)(x)+\dfrac{1}{2n}(X(fg)^{\prime})^{\prime}\\
 		&+f(x)\left[K_n(g;x)-g(x)-\dfrac{1}{2n}(Xg^{\prime})^{\prime}\right]
 	+g(x)\left[K_n(f;x)-f(x)-\dfrac{1}{2n}(Xf^{\prime})^{\prime}\right]\\
 	&-\left[g(x)-K_n(g;x)\right]\left[K_n(f;x)-f(x)\right].
 	\end{align*}
 	
 	The first three lines will be estimated below. First we will show that the sum of the following three lines equals $0$.
 	
 	For the time being we will leave out the argument $x$. One has 
 	\begin{align*}
 	&-K_nf\cdot K_ng-\dfrac{1}{n}Xf^{\prime}g^{\prime}+fg+\dfrac{1}{2n}\left(X^{\prime}(fg)^{\prime}+X(fg)^{\prime\prime}\right)\\
 	&+fK_ng-fg-\dfrac{1}{2n}f(X^{\prime}g^{\prime}+Xg^{\prime\prime})
 	+gK_nf-fg-\dfrac{1}{2n}g(X^{\prime}f^{\prime}+Xf^{\prime\prime})\\
 	&-[g-K_ng]\cdot [K_nf-f]\\
 	&=-K_nf\cdot K_ng-\dfrac{1}{n}Xf^{\prime}g^{\prime}+fg+\dfrac{1}{2n}\left(X^{\prime}f^{\prime}g+X^{\prime}fg^{\prime}\right)+\dfrac{1}{2n}X\left(f^{\prime\prime}g+2f^{\prime}g^{\prime}+fg^{\prime\prime}\right)\\
 	&+fK_ng-fg-\dfrac{1}{2n}(fX^{\prime}g^{\prime}+fXg^{\prime\prime})
 	+gK_nf-fg-\dfrac{1}{2n}(gX^{\prime}f^{\prime}+gXf^{\prime\prime})\\
 	&-gK_nf+K_ng\cdot K_nf+fg-fK_ng=0.
 	\end{align*}
 For the first two lines above we will use the Voronovskaya estimate given earlier, namely that for $h\in C^2[0,1]$ one has
\begin{align*}
&	\left\|n\left(K_nh-h\right)-\dfrac{1}{2}\left(Xh^{\prime}\right)^{\prime} \right\|_{\infty}\leq \dfrac{2}{3(n+1)}\left(\dfrac{3}{4}\|h^{\prime}\|_{\infty}+\|h^{\prime\prime}\|_{\infty}\right)\nonumber\\
&+\dfrac{9}{32}\left\{\dfrac{2}{\sqrt{n+1}}\omega_1\left(h^{\prime\prime};\dfrac{1}{\sqrt{n+1}}\right)+\omega_2\left(h^{\prime\prime};\dfrac{1}{\sqrt{n+1}}\right)\right\}=:U(h,n).
\end{align*}
For the third line we use Theorem \ref{t1.4} showing that for $h\in C^2[0,1]$ we get
$$ \| K_nh-h \|_{\infty}\leq\dfrac{1}{2n}\| h^{\prime}\|_{\infty}+\dfrac{9}{8n}\| h^{\prime\prime}\|_{\infty}={\cal O}\left(\dfrac{1}{n}\right). $$
Collecting these inequalities gives
\begin{align*}&\|n\left[K_n(fg)-K_nf\cdot K_ng\right]-Xf^{\prime}g^{\prime}  \|_{\infty}
\leq U(fg,n)+\|f\|_{\infty}U(g,n)+\| g\|_{\infty}U(f,n)+{\cal O}\left(\dfrac{1}{n}\right)\\
&=\left\{\begin{array}{l}
o(1),\,\, f,g\in C^2[0,1],\\
\vspace{-0.4cm}\\
{\cal O}\left(\dfrac{1}{\sqrt{n}}\right),\,\,f,g\in C^3[0,1],\\
\vspace{-0.4cm}\\
{\cal O}\left(\dfrac{1}{{n}}\right),\,\,f,g\in C^4[0,1].
\end{array}\right.
\end{align*}
\end{proof}

In the following we  give a Gr\"uss-Voronovskaya type theorem when $f$ and $g$ are only in $C^1[0,1]$.

\begin{theorem} Let $f,g\in C^1[0,1]$ and $n\geq 1$. Then there is a constant $C$ independent of $n,f,g$ and $x$, such that
	\begin{align*}
	&\left\|K_n(fg)-K_nf\cdot K_ng-\dfrac{X}{n}f^{\prime}g^{\prime}\right\|_{\infty}\leq \dfrac{C}{n}\left\{\omega_3\left(f^{\prime},n^{-\frac{1}{6}}\right)\omega_3\left(g^{\prime},n^{-\frac{1}{6}}\right)\right.\\
	&+\|f^{\prime}\|_{\infty}\omega_3\left(g^{\prime},n^{-\frac{1}{6}}\right)+\|g^{\prime}\|_{\infty}\omega_3\left(f^{\prime},n^{-\frac{1}{6}}\right)\\
	&+\left.
	\max\left\{ \dfrac{\|f^{\prime}\|_{\infty}}{n^{\frac{1}{2}}},\omega_3\left(f^{\prime},n^{-\frac{1}{6}}\right)\right\}
	\max\left\{ \dfrac{\|g^{\prime}\|_{\infty}}{n^{\frac{1}{2}}},\omega_3\left(g^{\prime},n^{-\frac{1}{6}}\right)\right\}\right\}.
	\end{align*}
	\end{theorem}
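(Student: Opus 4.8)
The plan is to use that the bilinear functional $D(f,g;x):=K_n(fg;x)-K_n(f;x)K_n(g;x)$ is invariant under adding constants to $f$ or to $g$: since $K_ne_0=e_0$ one has $D(c,g)\equiv 0$, and $\frac{X}{n}f'g'$ is manifestly constant–invariant as well. Hence I may normalize to representatives with $\|f\|_\infty\le\frac12\|f'\|_\infty$ and $\|g\|_\infty\le\frac12\|g'\|_\infty$. The starting point is the purely algebraic identity, valid for all $f,g\in C[0,1]$,
\[
D(f,g)=\bigl[K_n(fg)-fg\bigr]-f\bigl[K_ng-g\bigr]-g\bigl[K_nf-f\bigr]-\bigl(K_nf-f\bigr)\bigl(K_ng-g\bigr).
\]
The same cancellation as in the proof of Theorem \ref{t2.3} (using $(X(fg)')'-f(Xg')'-g(Xf')'=2Xf'g'$) shows that for $C^2$ arguments the first three blocks minus $\frac{X}{n}f'g'$ collapse into $V(fg)-fV(g)-gV(f)$, where $V(h):=K_nh-h-\frac{1}{2n}(Xh')'$. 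Since here $f,g$ are only $C^1$, I must regularize before invoking this.

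Second, I would smooth at the scale $\rho:=n^{-1/6}$. Setting $\widetilde f(x):=f(0)+\int_0^x S_{3,\rho}(f')$ with $S_{3,\rho}$ a third–order Steklov mean of $f'$ (or a near–minimizer of $K_3(f',\rho^3)$), one obtains $\widetilde f\in C^4[0,1]$ with $\|f'-\widetilde f'\|_\infty\le C\,\omega_3(f',\rho)$ and $\|\widetilde f^{(j+1)}\|_\infty\le C\rho^{-j}\omega_j(f',\rho)$ for $j=1,2,3$, and likewise for $g$. I split both $D$ and $\frac{X}{n}f'g'$ bilinearly into a smooth part $(\widetilde f,\widetilde g)$ and three parts carrying a rough factor $f-\widetilde f$ or $g-\widetilde g$. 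The three rough parts are handled by the Chebyshev–Gr\"uss inequality of Section 4: using $K_n((e_1-x)^2;x)\le\frac{1}{2(n+1)}$ and the elementary bound $\widetilde\omega(u;t)\le t\|u'\|_\infty$ for $u\in C^1$, one gets
\[
\bigl|D(f-\widetilde f,\widetilde g;x)\bigr|\le\tfrac14\,\widetilde\omega\!\Bigl(f-\widetilde f;\tfrac{c}{\sqrt n}\Bigr)\widetilde\omega\!\Bigl(\widetilde g;\tfrac{c}{\sqrt n}\Bigr)\le\frac{C}{n}\,\omega_3(f',\rho)\,\|g'\|_\infty,
\]
while the subtracted $\frac{X}{n}(f'-\widetilde f')\widetilde g'$ is of the same size because $\|X\|_\infty\le\frac14$. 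These three parts produce exactly the first two groups in the claimed bound, namely $\omega_3(f',\rho)\omega_3(g',\rho)$ and $\|f'\|_\infty\omega_3(g',\rho)+\|g'\|_\infty\omega_3(f',\rho)$.

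Third, for the smooth part I apply the identity to $\widetilde f,\widetilde g\in C^4$, bounding $V(\widetilde f\widetilde g)-\widetilde fV(\widetilde g)-\widetilde gV(\widetilde f)$ by $\frac1n\bigl[U(\widetilde f\widetilde g,n)+\|\widetilde f\|_\infty U(\widetilde g,n)+\|\widetilde g\|_\infty U(\widetilde f,n)\bigr]$ via the quantitative Voronovskaya estimate of Theorem \ref{t1.6}, and the product term $(K_n\widetilde f-\widetilde f)(K_n\widetilde g-\widetilde g)$ via
\[
\|K_n\widetilde f-\widetilde f\|_\infty\le\frac{C}{\sqrt n}\,\max\Bigl\{\tfrac{\|f'\|_\infty}{\sqrt n},\,\omega_3(f',\rho)\Bigr\},
\]
which follows from Theorem \ref{t1.4} (for $\widetilde f\in C^2$) together with the Steklov bound on $\|\widetilde f''\|_\infty$. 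Multiplying two such factors yields precisely the final max–max term, and the $U$–contributions, once normalized so that $\|\widetilde f\|_\infty\le C\|f'\|_\infty$, turn out to be dominated by the first two groups.

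The main obstacle is the bookkeeping in the smooth part: inserting the Steklov estimates into $U(\widetilde f,n)$ produces the intermediate quantities $\rho^{-1}\omega_1(f',\rho)$ and $\rho^{-2}\omega_2(f',\rho)$ (through $\|\widetilde f''\|_\infty$ and $\|\widetilde f'''\|_\infty$), which are a priori not of the advertised $\omega_3$–type and do not vanish even when $\omega_3(f',\rho)$ does (e.g.\ for quadratic $f'$). The decisive device is to invoke the classical Marchaud inequalities, $\omega_2(f',\rho)\le C\bigl(\rho^{-1}\omega_3(f',\rho)+\rho^2\|f'\|_\infty\bigr)$ and $\omega_1(f',\rho)\le C\bigl(\rho^{-2}\omega_3(f',\rho)+\rho\|f'\|_\infty\bigr)$; after substituting them, every surviving term is either an $\omega_3(\cdot,n^{-1/6})$–contribution absorbed by the first three groups or a term of size $\|f'\|_\infty\|g'\|_\infty/n^2$ absorbed by the max–max term. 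Checking that $\rho=n^{-1/6}$ is exactly the exponent balancing the Steklov cost $\rho^{-3}\sim n^{1/2}$ against the Voronovskaya gain $1/n$ is where the stated power $n^{-1/6}$ and the absolute constant $C$ are forced.
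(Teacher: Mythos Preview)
Your approach is correct and arrives at the same inequality, but it diverges from the paper's argument in two technical respects.

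First, for the three ``rough'' pieces the paper does not use the Chebyshev--Gr\"uss inequality of Section~4; it invokes instead \cite[Theorem 4]{1}, which gives an exact mean--value representation $D(f,g;x)=f'(\eta)g'(\theta)\bigl[K_n(e_2;x)-(K_n(e_1;x))^2\bigr]$ and hence the bound $|nE_n(f,g;x)|\le 2\bigl[x(1-x)+\tfrac{1}{24(n+1)}\bigr]\|f'\|_\infty\|g'\|_\infty$ directly. Your route via $\widetilde\omega(u;t)\le t\|u'\|_\infty$ yields the same order and is a legitimate substitute.

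Second, and more substantially, for the smooth part the paper avoids the Marchaud clean-up entirely. After Theorem~\ref{t1.6} it applies a Landau--Kolmogorov inequality (\cite[Remark 2.15]{G_teza}) to collapse all intermediate derivatives, obtaining for $u\in C^4[0,1]$ the estimate $\bigl|n[K_nu-u]-\tfrac12(Xu')'\bigr|\le \tfrac{C}{n}\max\{\|u'\|_\infty,\|u^{(4)}\|_\infty\}$. This reduces the smooth block to $\tfrac{C}{n^2}\max\{\|u'\|_\infty,\|u^{(4)}\|_\infty\}\max\{\|v'\|_\infty,\|v^{(4)}\|_\infty\}$. The smoothing functions are then taken from \cite[Lemma~3.1]{Rocky} (with $r=1$, $s=2$), which delivers $\|u'\|_\infty\le Ch^{-1}\omega_1(f,h)\le C\|f'\|_\infty$ and $\|u^{(4)}\|_\infty\le Ch^{-3}\omega_3(f',h)$ simultaneously, so the max--max term drops out immediately and no intermediate moduli ever appear. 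Your Steklov-plus-Marchaud route reaches the same endpoint but with considerably more bookkeeping; the paper's use of Landau--Kolmogorov plus a purpose-built smoothing lemma is the shortcut that eliminates the obstacle you identified with $\rho^{-1}\omega_1$ and $\rho^{-2}\omega_2$.

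Your normalization via constant-invariance of $E_n$ is a nice touch that the paper does not make explicit; it cleanly controls the factors $\|\widetilde f\|_\infty$, $\|\widetilde g\|_\infty$ in front of the Voronovskaya remainders.
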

\begin{proof}
	Let
	\begin{equation}
	\label{x1} E_n(f,g;x)=K_n(fg;x)-K_n(f;x)K_n(g;x)-\dfrac{x(1-x)}{n}f^{\prime}(x)g^{\prime}(x),
		\end{equation}
and denote $C$ a constant independent of $n,f,g$ and $x$, which may change its values during the course of the proof.

For $f,g\in C^1[0,1]$ fixed and $u,v\in C^4[0,1]$ arbitrary, one has
\begin{align}\label{x4}
|E_n(f,g;x)|&=\left|E_n(f-u+u,g-v+v;x)\right|\\
&\leq\left|E_n(f-u,g-v;x)\right|+\left|E_n(u,g-v;x)\right|+
\left|E_n(f-u,v;x)\right|+\left|E_n(u,v;x)\right|.\nonumber
\end{align}
Let $h(x)=x,\,x\in[0,1]$. Applying \cite[Theorem 4]{1} there exists $\eta,\theta\in[0,1]$ such that
\begin{align}\label{x2}
K_n(fg;x)-K_n(f;x)K_n(g;x)&=f^{\prime}(\eta)g^{\prime}(\theta)
\left[K_n(h^2;x)-(K_n(h;x))^2\right]\nonumber\\
&=f^{\prime}(\eta)g^{\prime}(\theta)\left\{x(1-x)\dfrac{n}{(n+1)^2}+\dfrac{1}{12(n+1)^2}\right\}.\end{align}
From (\ref{x1}) and (\ref{x2}) we get
\begin{align}\label{x5}
\left|nE_n(f,g;x)\right|&\leq \left[x(1-x)\dfrac{n^2}{(n+1)^2}+\dfrac{n}{12(n+1)^2}+x(1-x)\right]\|f^{\prime}\|_{\infty}\|g^{\prime}\|_{\infty}\nonumber\\
&\leq2\left[x(1-x)+\dfrac{1}{24(n+1)}\right]\|f^{\prime}\|_{\infty}\|g^{\prime}\|_{\infty}.
\end{align}
Using Theorem \ref{t1.6} for $f\in C^4[0,1]$ we get
\begin{align*}
\left|n\left[K_n(f;x)-f(x)\right]-\dfrac{1}{2}\left(Xf^{\prime}\right)^{\prime}(x)\right|\leq C\dfrac{1}{n}\left(\|f^{\prime}\|_{\infty}+\|f^{\prime\prime}\|_{\infty}+\|f^{\prime\prime\prime}\|_{\infty}+\|f^{(4)}\|_{\infty}\right).
\end{align*}
But, for $f\in C^n[a,b]$, $n\in\mathbb{N}$ one has (see \cite[Remark 2.15]{G_teza})
$$ \displaystyle\max_{0\leq k\leq n}\left\{\| f^{(k)}\|\right\}\leq C\max\left\{
\|f\|_{\infty},\|f^{(n)}\|_{\infty}\right\}. $$
Therefore,
\begin{equation}\label{x3}
\left|n\left[K_n(f;x)-f(x)\right]-\dfrac{1}{2}\left(Xf^{\prime}\right)^{\prime}(x)\right|\leq \dfrac{C}{n}\max\left\{
\|f^{\prime}\|_{\infty},\|f^{(4)}\|_{\infty}\right\}
\end{equation}
For $u,v\in C^4[0,1]$ using the same decomposition as in proof of Theorem \ref{t2.3},  the relation (\ref{x3}) and Theorem \ref{t1.4}, we get
\begin{align}\label{x6}
\left|E_n(u,v;x)\right|	&\leq \left|K_n(uv;x)-(uv)(x)-\dfrac{1}{2n}\left(X(uv)^{\prime}\right)^{\prime}\right|\nonumber\\
&+|u(x)|\left|K_n(v;x)-v(x)-\dfrac{1}{2n}(Xv^{\prime})^{\prime}\right|\
+|v(x)|\left|K_n(u;x)-u(x)-\dfrac{1}{2n}(Xu^{\prime})^{\prime}\right|\nonumber\\
&+\left|v(x)-K_n(v;x)\right|\left|K_n(u;x)-u(x)\right|\nonumber\\
&\leq\dfrac{C}{n^2}\max\left\{
\|u^{\prime}\|_{\infty},\|u^{(4)}\|_{\infty}\right\}\max\left\{
\|v^{\prime}\|_{\infty},\|v^{(4)}\|_{\infty}\right\}.
\end{align}
From the relations (\ref{x4}), (\ref{x5}) and (\ref{x6}) we obtain
\begin{align*}
|E_n(f,g;x)|&\leq \dfrac{2}{n}\left[x(1-x)+\dfrac{1}{24(n+1)}\right]\left\{
\|(f-u)^{\prime}\|_{\infty}\|(g-v)^{\prime}\|_{\infty}+\|u^{\prime}\|_{\infty}\|(g-v)^{\prime}\|_{\infty}\right.\\
&\left.+\|(f-u)^{\prime}\|_{\infty}\|v^{\prime}\|_{\infty}\right\}+\dfrac{C}{n^2}\max\left\{
\|u^{\prime}\|_{\infty},\|u^{(4)}\|_{\infty}\right\}\max\left\{
\|v^{\prime}\|_{\infty},\|v^{(4)}\|_{\infty}\right\}.
\end{align*}
Using \cite[Lemma 3.1]{Rocky} for $r=1$, $s=2$, $f_{h,3}=u$ and $g_{h,3}=v$, for all $h\in (0,1]$ and $n\in \mathbb{N}$, it follows
\begin{align*}|E_n(f,g;x)|&\leq \dfrac{C}{n}\left\{\omega_3(f^{\prime},h)\omega_3(g^{\prime},h)+\dfrac{1}{h}\omega_1(f,h)\omega_3(g^{\prime},h)+\dfrac{1}{h}\omega_1(g,h)\omega_3(f^{\prime},h)\right\}\\
&+\dfrac{C}{n^2}\max\left\{\dfrac{1}{h}\omega_1(f,h),\dfrac{1}{h^3}\omega_3(f^{\prime},h)\right\}\cdot \max\left\{\dfrac{1}{h}\omega_1(g,h),\dfrac{1}{h^3}\omega_3(g^{\prime},h)\right\}\\
&\leq \dfrac{C}{n}\left\{\omega_3(f^{\prime},h)\omega_3(g^{\prime},h)+
\|f^{\prime}\|_{\infty}\omega_3(g^{\prime},h)+\|g^{\prime}\|_{\infty}\omega_3(f^{\prime},h)\right\}\\
&+\dfrac{C}{n^2}\max\left\{\|f^{\prime}\|_{\infty},\dfrac{1}{h^3}\omega_3(f^{\prime},h)\right\}\cdot \max\left\{\|g^{\prime}\|_{\infty},\dfrac{1}{h^3}\omega_3(g^{\prime},h)\right\}.
\end{align*}
Choosing $h=n^{-\frac{1}{6}}$, we obtain
\begin{align*}|E_n(f,g;x)|&\leq \dfrac{C}{n}\left\{\omega_3\left(f^{\prime},n^{-\frac{1}{6}}\right)\omega_3\left(g^{\prime},n^{-\frac{1}{6}}\right)\right.\\
&+\|f^{\prime}\|_{\infty}\omega_3\left(g^{\prime},n^{-\frac{1}{6}}\right)+\|g^{\prime}\|_{\infty}\omega_3\left(f^{\prime},n^{-\frac{1}{6}}\right)\\
&+\left.
\max\left\{ \dfrac{\|f^{\prime}\|_{\infty}}{n^{\frac{1}{2}}},\omega_3\left(f^{\prime},n^{-\frac{1}{6}}\right)\right\}
\max\left\{ \dfrac{\|g^{\prime}\|_{\infty}}{n^{\frac{1}{2}}},\omega_3\left(g^{\prime},n^{-\frac{1}{6}}\right)\right\}\right\}.
\end{align*}
This implies the theorem.

\end{proof}

\noindent{\bf Acknowledgements.} The first author was supported by  Lucian Blaga University of Sibiu research under grant LBUS-IRG-2018-04. The second appreciates financial support of the University of Duisburg-Essen.

$  $

\noindent{\bf Author details}\\

\noindent${}^{1}$Lucian Blaga University of Sibiu, Department of Mathematics and Informatics, Str. Dr. I. Ratiu, No.5-7, RO-550012  Sibiu, Romania, e-mail: anamaria.acu@ulbsibiu.ro\\
${}^{2}$University of Duisburg-Essen, Faculty of Mathematics, Bismarckstr. 90, 47057 Duisburg, Germany, e-mail: heiner.gonska@uni-due.de;


\end{document}